\newtheorem{theorem}{Theorem}
\newtheorem{lemma}{Lemma}
\newenvironment{proof}{{\noindent\bf Proof.}}{\hfill$\Box$\\}
\DeclareMathOperator{\Aut}{Aut}
\newcommand{\lng}{\langle}
\newcommand{\rng}{\rangle}
\newcommand{\R}{\mathbb R}
\newcommand{\tp}{^\top}
\begin{document}

\title{Positive operators of Extended Lorentz cones
\thanks{{\it 2010 AMS Subject Classification:} 15B48, 47H07, 47L07, 46N10. {\it Key words and phrases:} extended Lorentz cone, 
positive operator}
}
\author{S. Z. N\'emeth\\School of Mathematics, University of Birmingham\\Watson Building, Edgbaston\\Birmingham B15 2TT, United Kingdom\\email: s.nemeth@bham.ac.uk
\and G. Zhang\\School of Mathematics, University of Birmingham\\Watson Building, Edgbaston\\Birmingham B15 2TT, United Kingdom\\email: gxz245@bham.ac.uk}
\date{}
\maketitle

\begin{abstract}
In this paper necessary conditions and sufficient conditions are given for a linear operator to be a  positive operator of an Extended Lorentz 
cone. Similarities and differences with the positive operators of Lorentz cones are investigated.  
\end{abstract}

\section{Introduction}
The Lorentz cone (second-oreder cone) is a very important cone in optimization problems. Many models in robust optimization, plant location problems and investment portfolio manangement can be formulated as a second-order cone programming \cite{AG2003}. In \cite{NZ20151} and \cite{NZ2016}, we generalized the $n$-dimensional Lorentz cone to mutually dual $p+q$-dimension cones:
\begin{equation}
L(p,q) = \{(x,u) \in \mathbb{R}^p \times \mathbb{R}^q : x \geq \|u\|e  \}
\end{equation}
and
\begin{equation}
M(p,q) = \{(x,u) \in \mathbb{R}^p \times \mathbb{R}^q : \langle x, e \rangle \geq \|u\|,x\ge0\}
\end{equation}
where  $e=(1, \dots, 1) \in \mathbb{R}^p $. The extended Lorentz cones $L(p, q)$ and $M(p,q)$ become Lorentz cones exactly in the special case 
$p=1$. This is the only case when $L(p, q)$ ($M(p,q)$) is self-dual.

The set $\Gamma(C)$ of positive operators of a cone $C$ is defined by \[\Gamma (C) = \{A \in \mathbb{R}^{(p+q)(p+q)} : AC \subseteq C\}\mbox{ 
\cite{LS1975}}.\]
The set of positive operator is a cone in $\mathbb{R}^{n \times n}$ \cite{SV1970}. It can be easily checked that $A$ is a positive operator of $C$ 
if and only if $A\tp$ is a positive operator of $C^*$. The authors of \cite{SV1970} introduced the characteristic matrix of the Lorentz cones as $J_n= diag(-1,\dots,-1,1)$, and showed that the Lorentz cone can be represented as :
\begin{equation*}
L(p,1)= \{x \in \mathbb{R}^n : x\tp J_n x \geq 0 \text{ and }x_n \geq 0 \}.
\end{equation*}
Moreover, they proved the following theorem which characterizes a positive operator by a positive semidefiniteness condtion \cite{SV1970}:
\begin{theorem}
Let $A \in \mathbb{R}^{n \times n}$. If $A \in \Gamma(L(p,1)) \cup \Gamma(-L(p,1))$, then there exists a $\mu \geq 0$
\begin{equation}\label{lposi}
A\tp J_n A \succeq \mu J_n
\end{equation}
Conversely, if $rank(A) \neq 1$ and there is a $\mu \geq 0$ such that \eqref{lposi} holds, then
\begin{equation*}
A \in \Gamma(L(p,1)) \cup \Gamma(-L(p,1)).
\end{equation*}
\end{theorem}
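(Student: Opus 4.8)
The plan is to prove the two implications separately, keeping the quadratic description $L(p,1)=\{x: x\tp J_n x\ge 0,\ x_n\ge 0\}$ in force and exploiting that $A\mapsto A\tp J_n A$ is insensitive to the sign of $A$, since $(-A)\tp J_n(-A)=A\tp J_n A$. For necessity, suppose $A\in\Gamma(L(p,1))\cup\Gamma(-L(p,1))$; after possibly replacing $A$ by $-A$ we may assume $AL(p,1)\subseteq L(p,1)$. Writing $Q=A\tp J_n A$, this inclusion gives $x\tp Qx=(Ax)\tp J_n(Ax)\ge 0$ for every $x\in L(p,1)$. Both forms $x\tp J_nx$ and $x\tp Qx$ are even, and the region $\{x\tp J_nx\ge0\}$ is symmetric, so the sign constraint $x_n\ge0$ is immaterial and we obtain the homogeneous implication ``$x\tp J_nx\ge0\ \Rightarrow\ x\tp Qx\ge0$'' on all of $\R^n$. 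This is precisely the hypothesis of the S-lemma (S-procedure) for a pair of quadratic forms, whose Slater condition holds because $e_n\tp J_n e_n=1>0$. The S-lemma then produces a multiplier $\mu\ge0$ with $Q-\mu J_n\succeq0$, which is \eqref{lposi}.

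For sufficiency, assume \eqref{lposi} holds for some $\mu\ge0$ and that $\operatorname{rank}(A)\ne1$. From $A\tp J_n A\succeq\mu J_n$ we get, for every $x\in L(p,1)$, that $(Ax)\tp J_n(Ax)\ge\mu\,x\tp J_nx\ge0$, so $Ax$ satisfies the defining quadratic inequality and hence $Ax\in L(p,1)\cup(-L(p,1))$. Thus $A$ maps $L(p,1)$ into $L(p,1)\cup(-L(p,1))$, and it remains to show it sends all of $L(p,1)$ into a single one of the two halves. If $\operatorname{rank}(A)=0$ then $A=0\in\Gamma(L(p,1))$ and we are done, so assume $\operatorname{rank}(A)\ge2$.

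The crux is to rule out the degenerate possibility that $\ker A$ meets the interior $\inte L(p,1)=\{x: x\tp J_nx>0,\ x_n>0\}$. If $Az=0$ for some $z\in\inte L(p,1)$, then $0=z\tp(A\tp J_nA)z\ge\mu\,z\tp J_nz$ while $z\tp J_nz>0$, forcing $\mu=0$ and hence $A\tp J_n A\succeq0$. The latter says $y\tp J_ny\ge0$ for every $y$ in the subspace $\operatorname{range}(A)$, i.e.\ $\operatorname{range}(A)\subseteq L(p,1)\cup(-L(p,1))$. But no subspace of dimension $\ge2$ can lie in $L(p,1)\cup(-L(p,1))$: a $2$-plane would contain a unit circle, which is connected and avoids $0$, hence would lie in one of the disjoint relatively closed (so clopen) pieces $L(p,1)\setminus\{0\}$ or $(-L(p,1))\setminus\{0\}$ of the punctured set; yet the circle is symmetric under $y\mapsto-y$, and $y,-y$ cannot both lie in $L(p,1)\setminus\{0\}$ because $L(p,1)$ is pointed. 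This forces $\operatorname{rank}(A)\le1$, contradicting $\operatorname{rank}(A)\ge2$. Hence $\ker A\cap\inte L(p,1)=\emptyset$.

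Finally, a connectedness argument concludes the proof. The image $A(\inte L(p,1))$ is connected (continuous image of the convex set $\inte L(p,1)$) and, by the previous step, avoids $0$, so it lies in $(L(p,1)\cup(-L(p,1)))\setminus\{0\}$. Using the same clopen splitting as above, a connected set must lie entirely in $L(p,1)\setminus\{0\}$ or in $(-L(p,1))\setminus\{0\}$; taking closures and using $\overline{\inte L(p,1)}=L(p,1)$ gives $AL(p,1)\subseteq L(p,1)$ or $AL(p,1)\subseteq-L(p,1)$, that is $A\in\Gamma(L(p,1))\cup\Gamma(-L(p,1))$. I expect the main obstacle to be the sufficiency direction, and within it the degenerate-case analysis: making precise that the only obstruction to $A$ carrying $\inte L(p,1)$ off the origin is exactly the rank-$\le1$ situation, which is where the hypothesis $\operatorname{rank}(A)\ne1$ is consumed. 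The necessity, by contrast, is essentially a direct invocation of the S-lemma once the even-symmetry reduction has been made.
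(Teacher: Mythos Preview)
The paper does not supply its own proof of this theorem; it is quoted from \cite{SV1970} as background motivation for the later results on $L(p,q)$ and $M(p,q)$, so there is no in-paper argument to compare your attempt against. On its own merits your proof is correct. For necessity, the reduction to the homogeneous implication $x\tp J_nx\ge0\Rightarrow(Ax)\tp J_n(Ax)\ge0$ via even symmetry, followed by the S-lemma with Slater point $e_n$, is the standard and efficient route. For sufficiency, your connectedness argument is sound: the crucial step---that $\ker A\cap\inte L(p,1)\ne\emptyset$ would force $\mu=0$ and hence $\operatorname{range}(A)\subseteq L(p,1)\cup(-L(p,1))$, which by pointedness of $L(p,1)$ can contain no $2$-dimensional subspace---correctly isolates where the hypothesis $\operatorname{rank}(A)\ne1$ is consumed, and the final clopen splitting of $(L(p,1)\cup(-L(p,1)))\setminus\{0\}$ together with $\overline{\inte L(p,1)}=L(p,1)$ legitimately upgrades the interior inclusion to the full cone.
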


Since $L(p, q)$ and $M(p,q)$ are extensions of the second-order cone, the problem of finding the positive operators of $L(p,q)$ and $M(p,q)$
arises naturally.  The aim of this short note is to find both the necessary conditions and sufficient conditions  for a linear operator to be a
positive operator of $L(p,q)$ or $M(p,q)$ and state the similarities and differences between the case $p=1$ and $p>1$. In \cite{RS2016} Sznajder
determines all automorphism operators of $L(p,q)$. In particular these operators are also positive operators of $L(p,q)$. This shows that
the problem of finding all positive operators of $L(p,q)$ (or $M(p,q)$) is a much more difficult problem than the one solved by Sznajder. 
Although this problem is still open, the present note presents some partial results, by finding necessary conditions and sufficient conditions 
for a linear operator to be a positive operator of $L(p,q)$ (or $M(p,q)$).

The structure of the paper is as follows. First we introduce some notations. Then, we will prove a lemma about the characterization of an 
extended Lorentz cone. Finally based on this lemma, we present necessariy conditions and sufficient conditions for a linear operator to be a 
positive operator of an extended Lorentz cone.

\section{Notations}
Denote by $\mathbb{N}$ the set of positive integers. Let $k,m,p,q\in\mathbb{N}$ and $\mathbb{R}^m$ be the $m$-dimensional real 
Euclidean vector space. 

Define the direct product space $\mathbb{R}^p \times \mathbb{R}^q$ as the set of all pairs of vectors $(x,u)$, where $x \in \mathbb{R}^p$ and $u
\in \mathbb{R}^q$. 

Identify the vectors of $\R^m$ by column vectors and consider the canonical inner product defined on $\R^m$ by 
$\R^m\times\R^m\ni (x,y)\mapsto \lng x,y\rng:=x\tp y\in\R$ with the induced norm $\R^m\ni x\mapsto \|x\|=\sqrt{\lng x,x\rng}\in\R$.

The inner product in 
$\mathbb{R}^p \times \mathbb{R}^q\equiv\R^{p+q}$ is given by
\begin{equation*}
((x,u),(y,v))\mapsto \lng x,y\rng+\lng u,v\rng.
\end{equation*}
A closed and convex set $K$ is said to be a cone if $K\cap(-K)=\{0\}$, and $\lambda v \in K$, $\forall\lambda \geq 0$ and $\forall v \in K$. A cone $K$ is said to be a proper cone if the interior of $K$ is nonempty. Note that both $L(p. q)$ and $M(p,q)$ are proper cones.
For any cone $K \subseteq \mathbb{R}^m$, its dual cone is defined by
\begin{equation*}
D:= K^* : =\{y \in \mathbb{R}^m: \lng x, y \rng \geq 0 \text{, } \forall x \in K\}.
\end{equation*}
We also have that $D^* = K$, so $D$ and $K$ are mutually dual.
The complementarity set of $K$ is defined by
\begin{equation*}
	C(K) = \{ (x,s)\in\R^m\times\R^m: x \in K,\mbox{ }s \in K^*\mbox{ and }\lng x, s \rng = 0 \}\mbox{ \cite{RS2016}}.
\end{equation*}
A matrix $A \in \mathbb{R}^{m \times m}$ is said to be \emph{Lyapunov-like} on $K$ if $\lng Ax, s \rng=0$ for all $(x, s) \in C(K)$. Such matrix (transformations) are also characterized by the condition
\begin{equation*}
e^{tA} \in \Aut(K) \text{ for all } t \in \mathbb{R},
\end{equation*}
where $\Aut(K)$ denotes the automorphism group of the cone $K$. Note that any automorphism is a positive operator.

\section{Main results}

First we need to introduce two lemmas
\begin{lemma}\label{munion}
The extended Lorentz cone $M(p,q)$ can be expressed as
\begin{equation}\label{eu}
	M(p,q) =\{ z= (x, u) \in \mathbb{R}^p \times \mathbb{R}^q : z\tp Jz \geq 0,\mbox{ } x \in \mathbb{R}^p_+\},
\end{equation}
where
\begin{equation*}
J=
\begin{bmatrix}
E & 0 \\
0 & -I
\end{bmatrix}
\end{equation*}
and $E$ is a $p \times p$ matrix with all entries $1$.
\end{lemma}
\begin{proof}
We have $z=(x,u)\in M(p,q)$ if and only if $z\tp J z \geq 0$ and $x\ge0$, or equivalently
\begin{equation*}
0\le z\tp Jz = x\tp Ex - u\tp u=\langle x, e \rangle^2-\|u\|^2
\end{equation*}
and $x\ge0$. Thus, $z=(x,u)\in M(p,q)$ if and only if $\langle x, e \rangle\ge\|u\|$ and $x\ge0$.
\end{proof}

The next theorem states necessary conditions for a linear operator to be a positive operator of $M(p,q)$.

\begin{theorem}[Necessary conditions for positive operators of $\boldsymbol{M(p,q)}$] Let $p>1$ and $q>0$ be integers. Let  $A\in\R^{(p+q)\times(p+q)}$. If $A$ is a 
	positive operator of $M(p,q)$, then the following holds:
	\begin{enumerate}[(i)]
		\item\label{i2} The first $p$ rows of $A$ (more precisely their transpose) are in $L(p,q)$.
		\item\label{ii2} The first $p$ columns of $A$ are in $M(p,q)$.
		\item\label{iii2} By adding any $i$-th column $i = 1, \dots,p$ to the linear combination of the columns $p+1, \dots, p+q$ with coefficients 
			$u_1, \dots , u_q$ such that the Euclidean norm of $u =(u_1, \dots, u_q)\tp$ is one, we obtain an element in $M(p,q)$.
                     \item\label{iv2} The sum of any i-th column $i= 1, \dots, p$ with any $(p+j)$-th column $j = 1, \dots, q$ is in $M(p,q)$.
                     \item\label{v2}If $A$ is $M(p,q)$-Lyapunov like, then $e^{tA}\in \Aut(M(p,q))$ and hence it is in particular a positive
			     operator of $M(p,q)$, for any $t \in \mathbb{R}$. 
	\end{enumerate}

\end{theorem}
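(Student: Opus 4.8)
The plan is to exploit the defining relation $A\,M(p,q)\subseteq M(p,q)$ together with the duality $M(p,q)^*=L(p,q)$ recorded in the introduction. Throughout I would write $f_1,\dots,f_{p+q}$ for the canonical basis of $\R^{p+q}$, so that the $k$-th column of $A$ is $Af_k$; let $g_i\in\R^p$ and $h_j\in\R^q$ denote the canonical basis vectors of the two factors, so that $f_i=(g_i,0)$ for $i\le p$ and $f_{p+j}=(0,h_j)$. The entire argument rests on a single device: for each claim one exhibits an explicit test vector lying in $M(p,q)$ (or in $L(p,q)$) and then reads off the conclusion from the fact that $A$ (respectively $A\tp$) preserves the relevant cone.

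For (\ref{ii2}), (\ref{iii2}) and (\ref{iv2}) I would work directly with $A$. First, for $i\le p$ the vector $f_i=(g_i,0)$ lies in $M(p,q)$, since $g_i\ge0$ and $\lng g_i,e\rng=1\ge0=\|0\|$; applying $A$ gives (\ref{ii2}) at once. For (\ref{iii2}), given $u=(u_1,\dots,u_q)\tp$ with $\|u\|=1$, the vector $v=f_i+\sum_{j=1}^q u_jf_{p+j}=(g_i,u)$ satisfies $g_i\ge0$ and $\lng g_i,e\rng=1=\|u\|$, hence $v\in M(p,q)$ (in fact on its boundary); since $Av$ is exactly the indicated combination of columns, $Av\in M(p,q)$ yields (\ref{iii2}). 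Claim (\ref{iv2}) is then just the special case $u=h_j$ of (\ref{iii2}), a canonical basis vector of $\R^q$ having unit norm, so nothing new is needed beyond recording this.

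For (\ref{i2}) I would pass to the transpose. Because $A$ is a positive operator of $M(p,q)$ and $M(p,q)^*=L(p,q)$, the remark from the introduction shows that $A\tp$ is a positive operator of $L(p,q)$, that is $A\tp L(p,q)\subseteq L(p,q)$. For $i\le p$ the same vector $f_i=(g_i,0)$ also lies in $L(p,q)$, since $g_i\ge0=\|0\|\,e$; therefore $A\tp f_i\in L(p,q)$, and $A\tp f_i$ is precisely the transpose of the $i$-th row of $A$, which is (\ref{i2}). Finally, (\ref{v2}) is immediate from the Lyapunov-like characterization recalled in the Notations section: if $A$ is $M(p,q)$-Lyapunov-like then $e^{tA}\in\Aut(M(p,q))$ for every $t\in\R$, and every automorphism is a positive operator.

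I do not anticipate a genuine obstacle, as each part collapses to evaluating $A$ or $A\tp$ at a carefully chosen boundary point of the appropriate cone. The only step demanding real care is (\ref{i2}): one must remember to dualize and invoke $M(p,q)^*=L(p,q)$ rather than attempting to argue with $A$ and $M(p,q)$ directly. The remaining delicacy is purely bookkeeping, namely keeping the all-ones vector $e$ distinct from the canonical basis vectors and verifying that each test vector meets the defining inequality of its cone (here always with equality).
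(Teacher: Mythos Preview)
Your proof is correct. Parts (\ref{i2}), (\ref{ii2}), (\ref{iv2}) and (\ref{v2}) are handled essentially as in the paper, up to a harmless swap of viewpoint: the paper proves (\ref{i2}) by reading the nonnegativity of the first $p$ entries of $Az$ as saying that each of the first $p$ rows pairs nonnegatively with every $z\in M(p,q)$ and hence lies in $M(p,q)^*=L(p,q)$, and then proves (\ref{ii2}) by dualizing to $A\tp$; you do (\ref{i2}) via $A\tp$ and (\ref{ii2}) by direct evaluation of $A$ at $f_i$. These are two sides of the same coin.

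The genuine difference is in (\ref{iii2}). The paper argues dually: for arbitrary $z\in L(p,q)$ it expands $\lng z,\beta_i\rng$, applies Cauchy--Schwarz to bound $\sum_j u_j\lng z,\alpha_{p+j}\rng$ by $\big(\sum_j \lng z,\alpha_{p+j}\rng^2\big)^{1/2}$, and then uses $A\tp z\in L(p,q)$ to see that $\lng z,\alpha_i\rng$ dominates this square root, whence $\lng z,\beta_i\rng\ge0$ for all such $z$ and $\beta_i\in L(p,q)^*=M(p,q)$. Your route is shorter and more transparent: you simply note that the test vector $v=(g_i,u)$ with $\|u\|=1$ already lies on the boundary of $M(p,q)$, so $Av=\beta_i\in M(p,q)$ immediately. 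Your argument avoids Cauchy--Schwarz and the passage through the dual cone entirely; the paper's argument, on the other hand, makes the role of $A\tp$ and of $L(p,q)$ more visible and would adapt if one wanted, say, $\|u\|\le1$ rather than $\|u\|=1$. Both are perfectly valid.
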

\begin{proof}
\begin{enumerate}[(i)]
		\item\label{i3} Since $A$ is a positive operator of $M(p,q)$, the first $p$ entries of $Az$ are nonnegative for any 
			$z \in M(p,q)$. Hence, the inner product of $z$ and any row vector of the first $p$ rows is nonnegative. Therefore these
			row vectors must be in the dual cone of $M(p,q)$, that is, $L(p,q)$.
		\item\label{ii3} Based on the above argument, $A\tp$ is a positive operator of $L(p,q)$, hence (ii) follows similarly to (i). 
		\item\label{iii3} Let 
\begin{equation*}
\beta_i = \alpha_i + \sum_{j=1} ^q u_j \alpha_{p+j},
\end{equation*}
where $\alpha_t$ is the $t$-th column of $A$.
Then for any $z \in L(p,q)$
\begin{equation*}
\langle z, \beta_i \rangle = \langle z, \alpha_i \rangle + \sum_{j=1} ^q u_j \langle z, \alpha_{p+j} \rangle .
\end{equation*}
By the Cauchy-Schwarz inequality, we have:
\begin{equation*}
 \sqrt{ \sum_{j=1} ^q \langle z, \alpha_{p+j} \rangle ^2}=\sqrt{ \sum_{j=1} ^q \langle z, \alpha_{p+j} \rangle ^2} \sqrt{\sum_{j=1}^q u^2_i}\geq \sum_{j=1} ^q u_j \langle z, \alpha_{p+j} \rangle.
\end{equation*} 
So 
\begin{equation*}
\langle z, \beta_i \rangle \geq \langle z, \alpha_i \rangle- \sqrt{ \sum_{j=1} ^q \langle z, \alpha_{p+j} \rangle ^2}.
\end{equation*}
Since $A$ is a positive operator of $M(p,q)$, $A\tp$ is a positive operator of $L(p,q)$ and therefore $A\tp z \in L(p,q)$. 
Since $\lng z, \alpha_k \rng $ is the $k$-th entry of $A\tp z$, by the definition of $L(p, q)$, we have that the right hand side of the above 
inequality is nonnegative. Thus, $\beta_i \in M(p,q)$.
                     \item\label{iv3} Obviously, it is a special case of the above assertion.
                     \item\label{v3}See \cite{RS2016}.
	\end{enumerate}
\end{proof}
\begin{theorem}[A sufficient condition for positive operators of $\boldsymbol{M(p,q)}$] If there exists a $\lambda \geq 0$ such that $A\tp JA -
	\lambda J$ is positive semidefinite and the first $p$ lines of $A$ are in $L(p,q)$, then $A$ is a positive operator of $M(p,q)$.
\end{theorem}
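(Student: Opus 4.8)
The plan is to verify directly, via the characterization of $M(p,q)$ given in Lemma \ref{munion}, that $Az \in M(p,q)$ for every $z \in M(p,q)$. By that lemma, membership of a vector $w=(y,v)\in\R^p\times\R^q$ in $M(p,q)$ amounts to two separate conditions: the quadratic inequality $w\tp J w \geq 0$ together with the componentwise nonnegativity $y\ge 0$ of its first $p$ coordinates. So I would fix an arbitrary $z \in M(p,q)$, set $w = Az$, and check these two conditions one at a time, each of them being supplied by exactly one of the two hypotheses.

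For the quadratic inequality I would write $w\tp J w = z\tp (A\tp J A) z$ and exploit the hypothesis $A\tp J A - \lambda J\succeq 0$ by means of the decomposition
\[
z\tp (A\tp J A) z = z\tp (A\tp J A - \lambda J) z + \lambda\, z\tp J z.
\]
The first summand is nonnegative by positive semidefiniteness of $A\tp J A-\lambda J$, while the second is nonnegative because $\lambda\ge 0$ and because $z\in M(p,q)$ forces $z\tp J z\ge 0$ (again by Lemma \ref{munion}). Hence $w\tp J w\ge 0$, which is the first defining inequality.

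For the componentwise condition I would observe that, for $i=1,\dots,p$, the $i$-th coordinate of $w=Az$ equals $\langle a_i, z\rangle$, where $a_i$ denotes the transpose of the $i$-th row of $A$. Since $L(p,q)$ is the dual cone of $M(p,q)$, the hypothesis that the first $p$ rows of $A$ lie in $L(p,q)$ says precisely that $\langle a_i, z\rangle\ge 0$ for all $z\in M(p,q)$ and all such $i$. Thus the first $p$ coordinates of $Az$ are nonnegative, and combined with the previous paragraph this yields $Az\in M(p,q)$ by Lemma \ref{munion}. As $z\in M(p,q)$ was arbitrary, $A$ is a positive operator of $M(p,q)$.

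I do not anticipate a genuine obstacle here: the argument is a clean split in which each hypothesis delivers exactly one of the two inequalities that define $M(p,q)$. The only point deserving a little care is to invoke the duality $M(p,q)^*=L(p,q)$ with the correct orientation — pairing the rows of $A$ against elements of $M(p,q)$ — since this is the step that converts ``the first $p$ rows of $A$ lie in $L(p,q)$'' into ``the first $p$ components of $Az$ are nonnegative.''
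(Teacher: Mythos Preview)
Your proposal is correct and follows essentially the same argument as the paper: both split the verification of $Az\in M(p,q)$ into the quadratic condition (handled via $z\tp(A\tp JA-\lambda J)z\ge 0$ together with $\lambda z\tp Jz\ge 0$) and the nonnegativity of the first $p$ coordinates (handled via the duality $L(p,q)=M(p,q)^*$). Your write-up is slightly more explicit about the duality step, but the structure and the key ideas are identical.
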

\begin{proof}
Since the first $p$ rows of $A$ are in $L(p,q)$, the first $p$ entries of $Az$ are nonnegative for any $z\in M(p,q)$. Since $A\tp JA - \lambda J$ is
positive semidefinite, we have
\begin{equation*}
0 \leq z\tp(A\tp JA - \lambda J)z =(Az)\tp J (Az) - \lambda z\tp J z.
\end{equation*}
By lemma \ref{munion}, $\lambda z\tp J z \geq 0$. So $(Az)\tp J (Az) \geq 0$, hence by lemma \ref{munion}, $Az \in M(p,q)$. Thus, we have that $A$ 
is a positive operator of $M(p,q)$.
\end{proof}
Similar necessary conditions and sufficient conditions can be given for the positive operators of $L(p,q)$.

\section{Conclusion}
In this paper, we showed necessary conditions and sufficient conditions for positive operators of Extended Lorentz cones. Since the first $p$
entries (rather than the first entry only of vectors in $L(1,q)$) must all be postive, some extra conditions (such as the first $p$ lines are in
$L(p,q)$) are needed to ensure that $A$ is a positive operator when $A\tp JA - \lambda J$ is positive semidefinite.

In the future we wish to find a necessary and sufficient condition for a linear operator to be a positive operator of and extended Lorentz cone.

\bibliographystyle{plain}
\bibliography{Ref}
\end{document}